\documentclass[reqno,a4paper]{amsart}

\usepackage[dvipsnames,usenames]{color}
\usepackage{amssymb,amsfonts,amscd,amsthm,enumerate,pb-diagram}
\usepackage{amsbsy,bm,amsmath}
\usepackage[normalem]{ulem}
\usepackage{tikz-cd}
\usepackage[shortlabels]{enumitem}
\usepackage[hidelinks]{hyperref}
\usepackage[shortcuts]{extdash}

\textwidth=13.5cm

\newtheorem{theoremalpha}{Theorem} 

\newtheorem{lemma}{Lemma}[section]

\theoremstyle{definition}
\newtheorem{definition}{Definition}[section]

\newtheorem{example}{Example}[section]

\numberwithin{equation}{section}

\newcommand{\R}{\mathbb{R}}
\newcommand{\C}{\mathbb{C}}
\newcommand{\Sl}{\mathrm{SL}(3,\R)}
\newcommand{\id}{\mathrm{Id}}

\title[Almost complex totally geodesic surfaces in $\frac{\text{SL}(3,\R)}{\R\times \text{SO}(2)}$]
{Almost complex totally geodesic surfaces\\ in the nearly Kähler $\frac{\text{SL}(3,\R)}{\R\times \text{SO}(2)}$}

\author[M. Anarella]{Mateo Anarella}
\address{M.\ Anarella, Department of Mathematics, Faculty of Science, Hokkaido University, Kita~10, Nishi~8, Kita\-/Ku, Sapporo, Hokkaido, 060-0810, Japan}
\email{mateo.anarella@math.sci.hokudai.ac.jp}

\author[X. X. Cheng]{Xiuxiu Cheng}
\address{X.\ X.\ Cheng, School of Mathematics and Statistics, Zhengzhou University, Zhengzhou~450001, People’s Republic of China}
\email{xxc19@zzu.edu.cn}

\author[M. D'haene]{Marie D'haene}
\address{M.\ D'haene, Department of Mathematics, KU Leuven, Celestijnenlaan 200B, Box 2400, 3001 Leuven, Belgium}
\email{marie.dhaene@kuleuven.be}

\author[Z. J. Hu]{Zejun Hu}
\address{Z.\ J.\ Hu, School of Mathematics and Statistics, Zhengzhou University, Zhengzhou~450001, People's Republic of China}
\email{huzj@zzu.edu.cn}

\author[L. Vrancken]{Luc Vrancken}
\address{L.\ Vrancken, Ceramaths, Université Polytechnique Hauts\-/de\-/France, F-59313 Valenciennes, France;
Ceramaths, INSA Hauts de France, F-59313 Valenciennes, France; Department of Mathematics, KU
Leuven, Celestijnenlaan 200B, Box 2400, 3001 Leuven, Belgium}
\email{luc.vrancken@uphf.fr}

\subjclass[2020]{53C42}
\thanks{M.\ Anarella was partially supported by FWO and FNRS under EOS project G0I2222N.
X.\ X.\ Cheng was supported by the NSF of Henan Province (grant no.\ 252300421476) and the NSF of China
(grant no.\ 12001494).
M.\ D'haene was supported by Methusalem grant METH/21/03-long term structural funding of the Flemish Government and FWO (Research Foundation Flanders) grant K217724N.
Z.\ J.\ Hu was supported by NSF of China, grant no.\ 12171437.}
\date{}
\keywords{Almost complex, totally geodesic, nearly Kähler, homogeneous}

\begin{document}

\begin{abstract}
We give a detailed description of the nearly K\"{a}hler $\frac{\Sl}{\R\times \mathrm{SO}(2)}$, which is one of the pseudo\-/Riemannian counterparts of the flag manifold~$F(\mathbb{C}^3)$.
The main result is the classification of totally geodesic almost complex surfaces in this space.
\end{abstract}

\maketitle

\thispagestyle{empty}

\noindent
Let $(\bar{M},g)$ be a (pseudo-)Riemannian manifold equipped with an almost complex structure $J$ which is compatible with~$g$.
We call $(\bar{M},g,J)$ a nearly K\"{a}hler manifold if $\bar\nabla J$ is skew-symmetric,
where $\bar{\nabla}$ is the Levi-Civita connection associated to~$g$.
If, moreover $\bar\nabla J$ is non-degenerate, we say that $\bar M$ is \emph{strict} nearly Kähler.
In 1970, Gray~\cite{G3} generalized many well known theorems and formulas about the topology and geometry of K\"{a}hler manifolds to nearly K\"{a}hler manifolds.
Since then, the study of nearly K\"{a}hler manifolds attracted a lot of attention~\cite{G4,G3,G2,G1,S,K,N}.

Two- and four-dimensional nearly K\"{a}hler manifolds are always K\"{a}hler. 
Therefore, the study of strict nearly K\"{a}hler manifolds starts from dimension~six. 
Butruille~\cite{B} proved that, in the Riemannian case, the only complete simply connected homogeneous strict nearly K\"{a}hler manifolds in dimension six are $\mathbb{S}^6$, $\mathbb{S}^3\times\mathbb{S}^3$, $\mathbb{C}P^3$ and $F(\mathbb{C}^3)$.
However, such a classification is still open for the pseudo\-/Riemannian case.

Kath~\cite{K} provided four pseudo\-/Riemannian analogues of the flag manifold $F(\C^3)$: 
\begin{equation*}
      \begin{tikzcd}[ampersand replacement=\&, column sep=small,row sep=small]
            F(\C^3) \arrow[r]\arrow[dr, to path=|- (\tikztotarget)]
            \arrow[ddr, to path=|- (\tikztotarget)] \arrow[dddr, to path=|- (\tikztotarget)]\& \frac{\mathrm{SU}(2,1)}{\mathrm{U}(1) \times \mathrm{U}(1)} \& \text{nearly Kähler,}\\      
            \& \frac{\mathrm{SU}(2,1)}{\mathrm{SO}(1,1)\times \mathrm{U}(1)} \& \text{nearly para-Kähler,}\\ 
            \& \frac{\Sl}{\R\times \mathrm{SO}(2)} \& \text{nearly Kähler,}\\      
            \& \frac{\Sl}{(\R^+\times \R^+)\cup (\R^-\times \R^-)} \& \text{nearly para-Kähler.}\\   
      \end{tikzcd}
\end{equation*}
Note that a nearly para-Kähler manifold is a (pseudo-)Riemannian manifold $(M,g)$ equipped with a $(1,1)$-tensor $J$ such that $J^2=\id$, $g(JX,JY)=-g(X,Y)$ and $\bar\nabla J$ is skew-symmetric, where $\bar\nabla$ is the Levi-Civita connection associated to $g$.

Cwiklinski and Vrancken~\cite{CV} studied the nearly K\"{a}hler structure on the flag manifold $F(\mathbb{C}^3)$ and its pseudo\-/Riemannian counterpart $\frac{\mathrm{SU}(2,1)}{\mathrm{U}(1)\times \mathrm{U}(1)}$, and they classified the totally geodesic almost complex submanifolds of these two spaces.

In the current article, we focus on another pseudo\-/Riemannian dual of the flag manifold, namely, $\frac{\Sl}{\R\times \mathrm{SO}(2)}$.
We describe the structure of this space in Section~\ref{sec:nk-structure}.
Then, we classify totally geodesic almost complex surfaces in Section~\ref{sec:tot-geod-ac-surfaces}.
Here, a submanifold $\Sigma$ is called almost complex if $J$ preserves the tangent space, i.e. $J(T\Sigma) \subseteq T\Sigma$. 

Before stating the main theorem, we introduce the following notations.
Let $\mathbb{R}^{n+1}_\nu$ be the semi-Euclidean space with metric
\[
\langle v,w\rangle_p=-\sum_{i=1}^\nu v_i(p) w_i(p) +\sum_{i=\nu+1}^{n+1} v_i(p) w_i(p) , \quad \text{ where }
\ v=\sum_{i=1}^{n+1} v_i \partial_i,\  w=\sum_{i=1}^{n+1} w_i\partial_i,
\]
where $\{\partial_i\}_{i=1}^{n+1}$ is a coordinate frame of $\mathbb{R}^{n+1}_\nu$.
We denote the pseudosphere of radius $r>0$ with index $\nu$ in $\mathbb{R}^{n+1}_\nu$ by
$$S^n_\nu(r)=\{p\in \mathbb{R}^{n+1}_\nu\mid\langle p,p\rangle=r^2\}.$$

\begin{theoremalpha}
    \label{thm:main}
    Let $f:\Sigma\rightarrow \frac{\Sl}{\mathbb{R}\times \mathrm{SO}(2)}$ be a totally geodesic
    almost complex surface.
    Then,
    \begin{enumerate}[(1)]
    \item $\Sigma$ is locally isometric to $S^2_2(\frac{1}{2})$ and $f$ is locally 
    congruent to the immersion in Example~\ref{ex:e1+e2}, an orbit through $[\id]$ of a subgroup $\mathrm{SL}(2,\R)\subset\Sl$, or \label{theoA1}
    \vspace*{1ex}
    
    \item $\Sigma$ is locally isometric to $S^2(1)$ and $f$ is locally 
    congruent to the immersion in Example~\ref{ex:e3+e5}, an orbit through $[\id]$ of a subgroup $\mathrm{SO}(3)\subset\Sl$, or
    \vspace*{1ex}
    
    \item $\Sigma$ is locally isometric to $S^2_2(1)$ and $f$ is locally 
    congruent to the immersion in Example~\ref{ex:e3-e5}, an orbit through $[\id]$ of a subgroup $\mathrm{SO}^+(2,1)\subset\Sl$, or
    \vspace*{1ex}
    
    \item $\Sigma$ is locally isometric to $\mathbb{R}^2_2$ and $f$ is locally 
    congruent to the immersion in Example~\ref{ex:e1+e3+e5}, an orbit through $[\id]$ of a subgroup $\R^2\subset\Sl$, or
    \vspace*{1ex}
    
    \item $\Sigma$ is a degenerate surface and $f$ is locally congruent to the immersion in Example~\ref{ex:degenerateexample}, an orbit through $[\id]$ of a subgroup $\R^2\subset\Sl$.
    \end{enumerate}
\end{theoremalpha}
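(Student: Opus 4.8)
We outline the strategy. Since $\frac{\Sl}{\R\times\mathrm{SO}(2)}$ is homogeneous, any totally geodesic almost complex surface may, after composing with a transvection in $\Sl$, be assumed to pass through $[\id]$; and a connected totally geodesic submanifold is determined near a point by its tangent plane there, being the image of that plane under $\exp$. So the whole problem reduces to: (i) classifying the $J$-invariant $2$-planes $\pi\subset\mathfrak m:=T_{[\id]}\bar M$ that can occur as tangent planes of totally geodesic almost complex surfaces, up to the action of the isotropy group $H=\R\times\mathrm{SO}(2)$ (and, for the final congruence statement, of the full isometry group of $\bar M$); and (ii) realizing each such $\pi$ by an explicit totally geodesic surface.

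For (i) one first records two elementary constraints. As $\Sigma$ is almost complex of dimension $2$, a tangent frame is of the form $\{u,Ju\}$, and the skew-symmetry of $G:=\bar\nabla J$ together with the identity $G(X,JY)=-JG(X,Y)$ forces $G|_{\pi\times\pi}=0$, so the nearly Kähler torsion does not obstruct at first order; moreover $\langle u,Ju\rangle=0$ and $\langle Ju,Ju\rangle=\langle u,u\rangle$, so $\pi$ is positive definite, negative definite, or totally null according to the sign of $\langle u,u\rangle$, which already explains why only the models $S^2$, $S^2_2$ or $\R^2_2$, and a degenerate surface can appear. Second, $h\equiv 0$ together with the Gauss equation forces $\pi$ to be curvature invariant, $\bar R(X,Y)Z\in\pi$ for all $X,Y,Z\in\pi$ (equivalently the normal part of $\bar R$ vanishes, which also takes care of Codazzi; Ricci is then automatic). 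Using the explicit metric, complex structure and curvature tensor from Section~\ref{sec:nk-structure}, one parametrizes the $J$-invariant $2$-planes — writing a general $u$ in the adapted basis $e_1,\dots,e_6$ of $\mathfrak m$, reduced to a normal form by the $H$-action — and turns curvature invariance into a polynomial system in the coordinates of $u$. The space of $J$-invariant $2$-planes modulo $H$ is a positive-dimensional family, so this condition must really be used to cut it down; solving the system, and treating the null planes separately since they cannot be normalized in the same way and call for a pseudo-orthonormal frame, yields exactly the five planes spanned respectively by $e_1$, by $e_3+e_5$, by $e_3-e_5$, by $e_1+e_3+e_5$ (nondegenerate), and by a suitable null vector.

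For (ii), each of these five planes is the tangent space at $[\id]$ of the orbit of a subgroup $\mathrm{SL}(2,\R)$, $\mathrm{SO}(3)$, $\mathrm{SO}^+(2,1)$, $\R^2$, $\R^2$ of $\Sl$, and these orbits are precisely the surfaces of Examples~\ref{ex:e1+e2}, \ref{ex:e3+e5}, \ref{ex:e3-e5}, \ref{ex:e1+e3+e5}, \ref{ex:degenerateexample}, which are checked there to be totally geodesic and almost complex, with first fundamental forms $S^2_2(\tfrac{1}{2})$, $S^2(1)$, $S^2_2(1)$, $\R^2_2$, and degenerate. Together with the determination-by-tangent-plane remark and the classification up to isometry from step (i), this yields the asserted list; the two $\R^2$-orbits are distinguished by the causal type of $\pi$ (negative definite versus totally null).

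The main obstacle will be the case analysis in step (i). The curvature tensor of $\frac{\Sl}{\R\times\mathrm{SO}(2)}$ is not that of a symmetric space, so verifying or refuting curvature invariance requires genuine computation with the full $\bar R$, and there are enough $H$-orbits of $J$-invariant planes that several subcases must be ruled out. The degenerate case is the most delicate: the induced metric is degenerate, the usual orthonormal-frame and structure-equation machinery breaks down, and one must argue directly with a null frame and with the explicit immersion of Example~\ref{ex:degenerateexample}. A secondary technical point is identifying the full isometry group of $\bar M$ precisely enough to upgrade ``same tangent plane'' to ``locally congruent'' in the conclusion.
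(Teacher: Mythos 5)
Your proposal follows essentially the same route as the paper: use homogeneity and the fact that a totally geodesic surface is recovered from its tangent plane at $[\id]$ via the exponential map (natural reductivity identifying this with a group orbit $\exp(\mathrm{span}\{X,JX\})\cdot[\id]$), reduce to classifying $J$-invariant, curvature-invariant $2$-planes in $\mathfrak m$ up to the isotropy action and the extra isometries, and then match the resulting planes with the five example orbits. The only differences are organizational: the paper normalizes $X$ by splitting into cases according to how the plane sits relative to the distributions $V_1,V_2,V_3$ and uses the isometry $[A]\mapsto[(A^t)^{-1}]$ to identify two of these cases, rather than solving one general polynomial system as you suggest.
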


\section{Preliminaries}
\noindent
\begin{definition}
Let $\Sigma$ be a submanifold of an almost Hermitian manifold $(M,g,J)$. 
Then $\Sigma$ is said to be almost complex if $J$ preserves the tangent space, i.e.\ $J(T\Sigma)\subseteq T\Sigma$.
\end{definition}
\begin{definition}
Let $\Sigma$ be a (degenerate) submanifold of a pseudo\-/Riemannian space $M$ with Levi-Civita connection $\tilde{\nabla}$. 
We say that $\Sigma$ is totally geodesic if for every $p\in\Sigma$ and $v\in T_pM$ the geodesic with initial velocity $v$ lies initially in $\Sigma$.
\end{definition}

\begin{lemma}
    The following are equivalent:
    \begin{enumerate}
        \item $\Sigma$ is totally geodesic. \label{totgeogeodesics}
        \item For $X,Y\in\mathfrak{X}(\Sigma)$, $\tilde{\nabla}_XY$ is tangent to $\Sigma$. \label{totgeoconnection}
    \end{enumerate}
    If moreover, $\Sigma$ is non-degenerate, then the assertions above are equivalent to the following:
    \begin{enumerate}\addtocounter{enumi}{2}
        \item The geodesics of $\Sigma$ are also geodesics of $M$.
        \item The second fundamental form of $\Sigma$ vanishes everywhere.
    \end{enumerate}
\end{lemma}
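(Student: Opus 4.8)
The plan is to prove the first two assertions equivalent in full generality (in particular with no non-degeneracy hypothesis) and then, assuming $\Sigma$ non-degenerate, to splice in (3) and (4). For the implication (\ref{totgeoconnection}) $\Rightarrow$ (\ref{totgeogeodesics}): if $\tilde{\nabla}_X Y$ is tangent to $\Sigma$ whenever $X,Y\in\mathfrak{X}(\Sigma)$, then $(X,Y)\mapsto \tilde{\nabla}_X Y$ restricts to a well-defined affine connection $\nabla$ on $\Sigma$, regardless of whether the induced bilinear form on $\Sigma$ degenerates. Given $p\in\Sigma$ and $v\in T_p\Sigma$, let $\gamma$ be the $\nabla$-geodesic in $\Sigma$ with $\gamma(0)=p$ and $\gamma'(0)=v$; since $\tilde{\nabla}_{\gamma'}\gamma'=\nabla_{\gamma'}\gamma'=0$, the curve $\gamma$ is also a geodesic of $M$, and by uniqueness of geodesics in $M$ it coincides near $p$ with the $M$-geodesic through $p$ with initial velocity $v$. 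Hence that $M$-geodesic stays initially in $\Sigma$, so $\Sigma$ is totally geodesic.

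For the converse (\ref{totgeogeodesics}) $\Rightarrow$ (\ref{totgeoconnection}), I would fix $p\in\Sigma$ and work in $M$-geodesic normal coordinates $(x^1,\dots,x^n)$ centred at $p$, adapted so that $\partial_1|_p,\dots,\partial_k|_p$ span $T_p\Sigma$, where $k=\dim\Sigma$. Since $\Sigma$ is totally geodesic, $\exp^M_p$ maps a neighbourhood of $0$ in $T_p\Sigma$ into $\Sigma$; comparing dimensions (and using invariance of domain, $\Sigma$ being locally embedded) this image is open in $\Sigma$, so near $p$ the submanifold $\Sigma$ is the coordinate slice $\{x^{k+1}=\dots=x^n=0\}$ and $\partial_1,\dots,\partial_k$ restrict to a local frame of $T\Sigma$. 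Writing $X,Y\in\mathfrak{X}(\Sigma)$ as combinations of $\partial_1,\dots,\partial_k$ along $\Sigma$ and using that the Christoffel symbols of $\tilde{\nabla}$ vanish at the centre of a normal coordinate system (which in turn uses that $\tilde{\nabla}$ is torsion-free), one gets $(\tilde{\nabla}_X Y)_p\in T_p\Sigma$; since $p$ was arbitrary, (\ref{totgeoconnection}) follows. This normal-coordinate argument, which also covers the degenerate case, is the step I expect to require the most care, precisely because there is no induced Levi-Civita connection or second fundamental form on $\Sigma$ to fall back on.

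Finally, suppose $\Sigma$ is non-degenerate, so the induced metric has a Levi-Civita connection $\nabla^\Sigma$ and the Gauss formula $\tilde{\nabla}_X Y=\nabla^\Sigma_X Y+h(X,Y)$ holds, with $h$ the symmetric tensorial second fundamental form and $\nabla^\Sigma_X Y$ the tangential part. Then (\ref{totgeoconnection}) says exactly that $h(X,Y)=0$ for all $X,Y\in\mathfrak{X}(\Sigma)$, i.e.\ $h\equiv 0$, which is assertion (4). For assertion (3): a curve $\gamma$ in $\Sigma$ satisfies $\tilde{\nabla}_{\gamma'}\gamma'=\nabla^\Sigma_{\gamma'}\gamma'+h(\gamma',\gamma')$, so it is simultaneously an $M$-geodesic and a $\Sigma$-geodesic precisely when $h(\gamma',\gamma')=0$; running over all $p\in\Sigma$ and all initial velocities $v\in T_p\Sigma$ and using symmetry and polarization of $h$, the statement ``every geodesic of $\Sigma$ is a geodesic of $M$'' is equivalent to $h\equiv 0$. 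This closes the chain $(\ref{totgeogeodesics})\Leftrightarrow(\ref{totgeoconnection})\Leftrightarrow(4)\Leftrightarrow(3)$ and completes the proof.
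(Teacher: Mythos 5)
The gap is in your (1) $\Rightarrow$ (2) direction, at the very first step: the claim that, because $\Sigma$ is totally geodesic, $\exp^M_p$ maps a whole neighbourhood of $0$ in $T_p\Sigma$ into $\Sigma$. The definition only provides, for each individual $v\in T_p\Sigma$, some $\epsilon(v)>0$ with $\exp_p(tv)\in\Sigma$ for $|t|<\epsilon(v)$; the union of these segments is merely a star-shaped set about the origin, and a star-shaped set meeting every line through $0$ in a segment need not contain a ball, since a priori $\epsilon(v)$ may degenerate along a sequence of directions. Upgrading this to a uniform statement is a genuine piece of work: one needs, say, a continuation (open-closed) argument along each geodesic, using the hypothesis at the nearby points $\gamma_v(t)\in\Sigma$, the tangency of $\gamma_v'(t)$ to $\Sigma$, the local closedness of $\Sigma$ in $M$ and the continuity of its tangent spaces. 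You correctly identify this as the delicate step, but then assert it in one line, and everything that follows (that $\Sigma$ is locally the coordinate slice in normal coordinates, hence the vanishing of the Christoffel symbols at $p$ gives tangency of $(\tilde{\nabla}_XY)_p$) rests on it. So, as written, the implication (1) $\Rightarrow$ (2) is not proved.

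The gap is avoidable, and the paper's own argument shows how: it works with one geodesic at a time, so no uniformity in $v$ is ever needed. Choose a smooth complement $\mathcal{N}$ of $TU$ in $TM$ over a neighbourhood $U\subset\Sigma$ of $p$ and split $\tilde{\nabla}_XY=\tau(X,Y)+\nu(X,Y)$ into its $TU$- and $\mathcal{N}$-components, so that $\nu$ is a symmetric bilinear tensor. For a fixed $v\in T_pU$, the ambient geodesic $\gamma_v$ lies in $\Sigma$ for however short a time, so $\gamma_v'$ is tangent to $\Sigma$ there, and the $\mathcal{N}$-component at $p$ of $0=\tilde{\nabla}_{\gamma_v'}\gamma_v'$ is exactly $\nu_p(v,v)$; hence $\nu_p(v,v)=0$ for every $v$, and polarization gives $\nu\equiv0$, which is statement (2). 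Your (2) $\Rightarrow$ (1) coincides with the paper's proof, and your handling of (3) and (4) in the non-degenerate case via the Gauss formula and polarization is correct (the paper simply cites O'Neill, Proposition 4.13, for that part). So either supply the continuation argument justifying the slice claim, or replace your (1) $\Rightarrow$ (2) by the tensorial argument just described.
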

\begin{proof}
    If the submanifold is non-degenerate, then the statement reduces to Proposition~4.13 in~\cite{oneill}.
    Hence, here we only prove the equivalence between~(\ref{totgeogeodesics}) and~(\ref{totgeoconnection}) when $\Sigma$ is degenerate.
    
    (2) $\Rightarrow$ (1). Suppose that $\tilde{\nabla}$ preserves the tangent space of $\Sigma$.
    Then, $\tilde{\nabla}$ induces an affine connection on $\Sigma$, with the property that for every tangent vector $v\in T_p\Sigma$ there exists a unique geodesic $\gamma$ in $\Sigma$ with initial velocity $v$ starting at $p$.
    By uniqueness,  the geodesics obtained by viewing $\tilde\nabla$ as an affine connection on $M$ and on $\Sigma$, have to coincide.
    
    (1) $\Rightarrow$ (2). Suppose that $\Sigma$ is totally geodesic.
    Fix a $p \in \Sigma$ and in a neighbourhood $U \subset \Sigma$ of $p$, choose a smooth complement $\mathcal{N}$ of $TU$ in $TM$. 
    We write $(\tilde{\nabla}_XY)_p=\tau_p(X,Y)+\nu_p(X,Y)$ , where $\tau$ has values in $TU$ and $\nu$ in $\mathcal{N}$.
    As for the non-degenerate case, $\tau$ is an affine connection on $U$ and $\nu$ is a bilinear symmetric tensor. 
    Take any $v\in T_pU$ and let $\gamma_v$ be the geodesic of $M$ with initial velocity $v$.
    Since $\gamma$ stays initially in $\Sigma$, we can decompose $\tau_p(v,\gamma_v')+\nu_p(v,v)=\tilde{\nabla}_{v}\gamma_v'=0$. 
    Hence $\nu_p(v,v)=0$ for all $v\in T_pU$. 
    Since $\nu$ is symmetric, it follows that $\nu=0$.
\end{proof}

\section{The nearly Kähler structure on \texorpdfstring{$\frac{\Sl}{\R\times \mathrm{SO}(2)}$}{SL(2,R)/RxSO(2)}}
\label{sec:nk-structure}
\noindent
Recall that $\Sl=\{a\in \mathrm{GL}(3,\mathbb{R})\mid{\rm det}\, a=1\}$ with associated Lie algebra $\mathfrak{s}\mathfrak{l}(3,\mathbb{R}) =\{A \in \mathbb R^{3\times 3}\mid {\rm Tr}\, A=0\}$.
We denote by $H \cong \mathbb{R}\times \mathrm{SO}(2)$ the Lie subgroup of $\Sl$ consisting of elements
\begin{equation}
\label{eq:element-isotropy}
    \begin{pmatrix}
    e^t\cos s&e^t\sin s&0\\
    -e^t\sin s&e^t\cos s&0\\
    0&0&e^{-2t}\\
    \end{pmatrix},
\end{equation}
for $t \in \R$ and $s \in [0,2\pi)$.

A basis of the Lie algebra $\mathfrak{sl}(3,\mathbb{R})$ is given by the following matrices:
\begin{equation*}
\begin{alignedat}{3}
e_1 &= \begin{pmatrix}
    1&0&0\\
    0&-1&0\\
    0&0&0
\end{pmatrix},\quad
&& e_2 = \begin{pmatrix}
    0&1&0\\
    1&0&0\\
    0&0&0
\end{pmatrix},\quad
&& e_3 = \begin{pmatrix}
    0&0&\sqrt{2}\\
    0&0&0\\
    0&0&0
\end{pmatrix}, \\[1ex]
e_4 &= \begin{pmatrix}
    0&0&0\\
    0&0&\sqrt{2}\\
    0&0&0
\end{pmatrix}, \quad
&& e_5 = \begin{pmatrix}
    0&0&0\\
    0&0&0\\
    -\sqrt{2}&0&0
\end{pmatrix},\quad
&& e_6 = \begin{pmatrix}
    0&0&0\\
    0&0&0\\
    0&-\sqrt{2}&0
\end{pmatrix}, \\[1ex]
e_7 &= \begin{pmatrix}
    \frac{1}{\sqrt{3}}&0&0\\
    0&\frac{1}{\sqrt{3}}&0\\
    0&0&\frac{-2}{\sqrt{3}}
\end{pmatrix},\quad
&& e_8 = \begin{pmatrix}
    0&1&0\\
    -1&0&0\\
    0&0&0
\end{pmatrix}.
\end{alignedat}
\end{equation*}
We denote by $\mathfrak{h}$, resp. $\mathfrak{m}_1$, $\mathfrak{m}_2$, $\mathfrak{m}_3$, the vector spaces spanned by $\{e_7,e_8\}$, resp.$\{e_1,e_2\}$, $\{e_3,e_4\}$, $\{e_5,e_6\},$
Note that $\mathfrak{h}$ is the Lie algebra of $H$.
Since $\mathfrak m = \mathfrak m_1 \oplus \mathfrak m_2 \oplus \mathfrak m_3$ is $\mathrm{Ad}(H)$-invariant and $\mathfrak{sl}(3,\R) = \mathfrak h \oplus \mathfrak m$, the homogeneous space $\Sl/H$ is reductive.
More specifically, the adjoint action of $H$ on $\mathfrak m$ is as follows: 
\[
\begin{aligned}
\mathrm{Ad}(h)(e_1,e_2)
&=(e_1,e_2)\left( \begin{matrix}
\cos 2s&\sin 2s\\
-\sin 2s&\cos 2s\\
\end{matrix}
\right), \\
\mathrm{Ad}(h)(e_3,e_4)
&=(e_3,e_4)\left( \begin{matrix}
\cos s&\sin s\\
-\sin s&\cos s\\
\end{matrix}
\right)e^{3t}, \\
\mathrm{Ad}(h)(e_5,e_6)
&=(e_5,e_6)\left( \begin{matrix}
\cos s&\sin s\\
-\sin s&\cos s\\
\end{matrix}
\right)e^{-3t},
\end{aligned}
\]
where $h\in H$ is of the form~\eqref{eq:element-isotropy}.
Note that each $\mathfrak m_i$ is $\mathrm{Ad}(H)$-invariant.
Therefore, 
by left extending $\mathfrak m_i$ ($i=1,2,3$) we may define three rank-two distributions $V_1$, $V_2$ and $V_3$ on $\Sl/H$ which are $\Sl$-invariant.

We now define a bi-invariant metric on $\Sl$ by $\langle dL_aX,dL_aY\rangle = -\frac{1}{2} \mathrm{Tr} (X Y)$,
for $X,Y\in\mathfrak{sl}(3,\R)$.
At the identity, the non-zero components of this metric with respect to $\{e_1,\ldots,e_8\}$ are given by
\begin{equation*}
\langle e_1,e_1 \rangle =\langle e_2,e_2 \rangle = \langle e_7,e_7 \rangle = -1, \quad
\langle e_3,e_5 \rangle = \langle e_4,e_6 \rangle = \langle e_8,e_8 \rangle = 1.
\end{equation*}
The restriction to $V_1\oplus V_2\oplus V_3$ of $\langle \cdot,\cdot \rangle$ is an indefinite metric with signature 4, which is $\mathrm{Ad}(H)$-invariant.
Therefore we may define an $\Sl$-invariant metric $g$ on the quotient $\Sl/H$.
Moreover, note that 
\[ 
\langle e_i,[e_j,e_k]\rangle=\langle [e_i,e_j],e_k\rangle,\quad (i,j,k \in \{1,\ldots,6\})
\]
and hence $(\Sl/H, g)$ is \emph{naturally} reductive homogenous space.

We may define two linear complex structures, $J$ and $J_1$, on $\mathfrak{m}$ determined by
\begin{equation}
\label{eqn:ji-defs}
    \begin{aligned}
    Je_1&=-e_2, \qquad  && Je_3=e_4, \qquad && Je_5=e_6\\
    J_1e_1&=e_2, \qquad && J_1e_3=e_4,  \qquad && J_1e_5=e_6.\\
    \end{aligned}
\end{equation}
A direct computation shows that $J$ and $J_1$ are compatible with $\langle\cdot,\cdot\rangle$ and commute with the adjoint action of $H$ on $\mathfrak m$.
By left extending these structures, we obtain two $\Sl$-invariant almost complex structures on $\Sl/H$ which are compatible with~$g$.
In fact, $J_1$ is an integrable complex structure and $J$ turns $(\Sl/H, g, J)$ into a nearly Kähler manifold. 

In order to obtain an expression of the curvature tensor of $(\Sl/H,g)$,
we define a linear operator $F$, known as an $F$-structure~\cite{yano-kon}, on $\mathfrak{m}$ as follows:
\begin{equation}
\label{eqn:F-def}
    Fe_1 = Fe_2=0,\quad Fe_3=e_4,\quad Fe_4=-e_3,\quad Fe_5=-e_6,\quad Fe_6=e_5.
\end{equation}
A direct computation shows that $F$ is symmetric with respect to $\langle\cdot,\cdot\rangle$, commutes with the adjoint action of $H$ on $\mathfrak m$, and $F^3+F = 0$.
Consequently, by left extension, we may define an $\Sl$-invariant (1,1)-tensor $F$ on $\Sl$.  
Additionally, $J$, $J_1$ and $F$ pairwise commute.

Using the previously introduced structures, we may express the curvature tensor of $(\Sl/H, g)$ as follows:
\begin{equation}
\label{eqn:curvature-tensor}
\begin{aligned}
\bar{R}(X,Y)Z =&
    \ \frac{5}{2}\left( g(Y,Z)X-g(X,Z)Y)\right)\\
    & -\frac{3}{4}\left( g(JY,Z)JX-g(JX,Z)JY+2g(X,JY)JZ\right)\\
    & +\frac{9}{4}\left( g(Y, J_1JZ)J_1JX-
      g(X,J_1JZ)J_1JY\right)\\
    & +\frac{3}{4}\left(g(J_1JY, Z)X-
      g(J_1JX,Z)Y+g(Y, Z)J_1JX-
      g(X,Z)J_1JY)\right)\\
    & -3\left(g(Y, FZ)FX-g(X, FZ)FY\right).
\end{aligned}
\end{equation}

\section{Totally geodesic almost complex surfaces of \texorpdfstring{$\frac{\Sl}{\R\times\mathrm{SO}(2)}$}{SL(2,R)/RxSO(2)}}
\label{sec:tot-geod-ac-surfaces}

\noindent
Let $\Sigma$ be an almost complex surface of $(\Sl/H,g,J)$, where $J$ is defined by~\eqref{eqn:ji-defs}.
Note that, if $\Sigma$ is non-degenerate, the induced metric is either positive or negative definite since $g(JX,JX)= g(X,X)$ and $g(X,JX)=0$ for any $X\in \mathfrak X(\Sigma)$.

\subsection{Examples of totally geodesic almost complex surfaces}

To obtain our classification result, we introduce five surfaces in
$\frac{\Sl}{\R\times\mathrm{SO}(2)}$ and prove that they are totally geodesic and almost complex.
In order to do so, we use the two lemmas below.

For non-degenerate surfaces, we use a generalization of~\cite[Lemma 2.1]{alberto-nearly-kahler} to pseudo\-/Riemannian ambient spaces, and for degenerate surfaces we use a result analogous to~\cite[Theorem 4.3]{alberto-nearly-kahler}.
\begin{lemma}
\label{lemma:sff}
    Let $G/H$ be a naturally reductive pseudo\-/Riemannian homogeneous space with reductive decomposition $\mathfrak g = \mathfrak h \oplus \mathfrak m$.
    Let $K \subset G$ be a Lie subgroup such that the orbit $K\cdot o$ is a non-degenerate submanifold, where $o=[\mathrm{Id}]$. 
    Denote by $\mathfrak k_{\mathfrak m}$ and $\mathfrak k_{\mathfrak m}^\perp$, respectively, the tangent and normal space at $o$ of $K\cdot o$ (viewed as subspaces of~$\mathfrak m$).
    Then, the second fundamental form at $o$ of $K\cdot o$ is given by 
    \[
    h(X, Y) = \frac12 [X,Y]_{\mathfrak k_{\mathfrak m}^\perp},
    \]
    where $X,Y \in \mathfrak k_{\mathfrak m}$.
\end{lemma}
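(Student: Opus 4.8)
The plan is to read off the second fundamental form of $K\cdot o$ at $o$ directly from the ambient Levi-Civita connection $\bar\nabla$, following the argument of \cite[Lemma~2.1]{alberto-nearly-kahler} and checking that nothing is lost when $g$ is indefinite. Because $K\cdot o$ is non-degenerate, $\mathfrak m=T_o(G/H)$ is the direct sum $\mathfrak k_{\mathfrak m}\oplus\mathfrak k_{\mathfrak m}^{\perp}$ of its tangent and normal spaces, so the Gauss formula $\bar\nabla_{\tilde X}\tilde Y=\nabla^{K\cdot o}_{\tilde X}\tilde Y+h(\tilde X,\tilde Y)$ is available as in \cite{oneill}, and $h(X,Y)$ equals the $\mathfrak k_{\mathfrak m}^{\perp}$-component of $(\bar\nabla_{\tilde X}\tilde Y)_o$ for any vector fields $\tilde X,\tilde Y$ on $K\cdot o$ extending $X,Y\in\mathfrak k_{\mathfrak m}$. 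As extensions one uses the fundamental (Killing) vector fields $X^{*}$ of the $G$-action: for $X\in\mathfrak k$ the field $X^{*}$ is tangent to $K\cdot o$ everywhere, and $X^{*}_{o}$ corresponds to the $\mathfrak m$-component of $X$ under $T_o(G/H)\cong\mathfrak m$.

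Next I would record the value of $\bar\nabla$ on Killing fields. Since $G/H$ is naturally reductive, for $X\in\mathfrak m$ the curve $t\mapsto\exp(tX)\cdot o$ is a geodesic and $X^{*}$ restricts to its velocity field, so $(\bar\nabla_{X^{*}}X^{*})_{o}=0$; polarizing this identity and combining it with the torsion-freeness of $\bar\nabla$ and the relation $[X^{*},Y^{*}]=-[X,Y]^{*}$ yields $(\bar\nabla_{X^{*}}Y^{*})_{o}=-\tfrac12[X,Y]_{\mathfrak m}$ for all $X,Y\in\mathfrak m$. Every step here is purely algebraic, so it holds verbatim in the pseudo-Riemannian setting.

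Finally, given $X,Y\in\mathfrak k_{\mathfrak m}$, I would choose lifts $\hat X,\hat Y\in\mathfrak k$ with $\hat X_{\mathfrak m}=X$ and $\hat Y_{\mathfrak m}=Y$; the one genuine subtlety is that such a lift need not lie in $\mathfrak m$, i.e.\ $\hat X=X+\hat X_{\mathfrak h}$ with $\hat X_{\mathfrak h}\in\mathfrak h$ possibly nonzero. Because the Killing field of an element of $\mathfrak h$ vanishes at $o$ and $\bar\nabla$ is tensorial in its first argument, $(\bar\nabla_{\hat X^{*}}\hat Y^{*})_{o}=(\bar\nabla_{X^{*}}Y^{*})_{o}+(\bar\nabla_{X^{*}}(\hat Y_{\mathfrak h})^{*})_{o}$, and the second summand equals $-[X,\hat Y_{\mathfrak h}]\in\mathfrak m$, again by torsion-freeness and $[\mathfrak h,\mathfrak m]\subseteq\mathfrak m$. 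Projecting onto $\mathfrak k_{\mathfrak m}^{\perp}$, and using that $[\mathfrak k,\mathfrak k]\subseteq\mathfrak k$ — so that both $[\hat X,\hat Y]$ and $[\hat X_{\mathfrak h},\hat Y_{\mathfrak h}]$ have vanishing $\mathfrak k_{\mathfrak m}^{\perp}$-component — together with the symmetry of $h$, all the isotropy-dependent corrections combine into $\tfrac12[X,Y]_{\mathfrak k_{\mathfrak m}^{\perp}}$, which is the asserted formula (up to the overall sign fixed by the conventions chosen for the $G$-action and the shape tensor). The step that I expect to require the most care is precisely this last reorganization of the $\mathfrak h$-terms; the indefiniteness of $g$ itself causes no trouble, entering only through the non-degeneracy hypothesis, which is what makes the projection onto $\mathfrak k_{\mathfrak m}^{\perp}$ meaningful.
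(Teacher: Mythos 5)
You should first note that the paper itself gives no proof of this lemma: it is imported as a pseudo\-/Riemannian generalization of Lemma~2.1 of~\cite{alberto-nearly-kahler}, so your argument has to stand on its own. Its first two stages do, and they carry over verbatim to indefinite metrics: computing $h(X,Y)$ as the $\mathfrak k_{\mathfrak m}^\perp$\-/component of $(\bar\nabla_{\hat X^*}\hat Y^*)_o$ for Killing fields of lifts $\hat X,\hat Y\in\mathfrak k$, the identity $(\bar\nabla_{X^*}Y^*)_o=-\tfrac12[X,Y]_{\mathfrak m}$ for $X,Y\in\mathfrak m$, and $(\bar\nabla_{X^*}(\hat Y_{\mathfrak h})^*)_o=-[X,\hat Y_{\mathfrak h}]$ are all correct. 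The gap is the final ``reorganization of the $\mathfrak h$-terms''. Your own formulas give $h(X,Y)=\bigl(-\tfrac12[X,Y]_{\mathfrak m}-[X,\hat Y_{\mathfrak h}]\bigr)_{\mathfrak k_{\mathfrak m}^\perp}$, and the only relation supplied by $[\hat X,\hat Y]\in\mathfrak k$ is $\bigl([X,Y]_{\mathfrak m}+[X,\hat Y_{\mathfrak h}]+[\hat X_{\mathfrak h},Y]\bigr)_{\mathfrak k_{\mathfrak m}^\perp}=0$; eliminating the lift\-/dependent bracket yields $h(X,Y)=\bigl(\tfrac12[X,Y]_{\mathfrak m}+[\hat X_{\mathfrak h},Y]\bigr)_{\mathfrak k_{\mathfrak m}^\perp}$, equivalently $h(X,Y)=\tfrac12\bigl([\hat X_{\mathfrak h},Y]+[\hat Y_{\mathfrak h},X]\bigr)_{\mathfrak k_{\mathfrak m}^\perp}$. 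The isotropy corrections do \emph{not} combine into $\tfrac12[X,Y]_{\mathfrak k_{\mathfrak m}^\perp}$, and no choice of sign convention can make them: the claimed right-hand side is antisymmetric in $X,Y$ while $h$ is symmetric, so the identity as you derive it could only hold with both sides zero. The failure is genuine rather than a bookkeeping issue: in $S^2=\mathrm{SO}(3)/\mathrm{SO}(2)$ the orbit through $o$ of the one-parameter subgroup generated by $\hat X=X+\hat X_{\mathfrak h}$ with $\hat X_{\mathfrak h}\neq0$ is a small circle, whereas $\tfrac12[X,X]_{\mathfrak k_{\mathfrak m}^\perp}=0$ would declare it a geodesic; the corrected expression gives the nonzero value $[\hat X_{\mathfrak h},X]_{\mathfrak k_{\mathfrak m}^\perp}$, consistent with the geometry.

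What makes the formula usable is an extra hypothesis that your proof must invoke explicitly: if $\mathfrak k$ is canonically embedded, $\mathfrak k=(\mathfrak k\cap\mathfrak h)\oplus(\mathfrak k\cap\mathfrak m)$, then every $X\in\mathfrak k_{\mathfrak m}$ admits a lift with $\hat X_{\mathfrak h}=0$, the correction terms disappear, and since then $[X,Y]\in\mathfrak k$ both $h(X,Y)$ and $[X,Y]_{\mathfrak k_{\mathfrak m}^\perp}$ vanish, so the stated identity holds. Every application in the paper is of this type: in each example $X,JX\in\mathfrak k\cap\mathfrak m$ and $[X,JX]$ lies in $\mathfrak h$ or is zero. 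So to close your argument you must either add such a hypothesis (or whatever hypothesis the cited Lemma~2.1 carries) or prove the corrected formula $h(X,Y)=\tfrac12\bigl([\hat X_{\mathfrak h},Y]+[\hat Y_{\mathfrak h},X]\bigr)_{\mathfrak k_{\mathfrak m}^\perp}$ instead; as written, the asserted cancellation is false. A minor further slip: $[\hat X_{\mathfrak h},\hat Y_{\mathfrak h}]$ has no $\mathfrak m$\-/component simply because $[\mathfrak h,\mathfrak h]\subseteq\mathfrak h$; it need not lie in $\mathfrak k$, since $\hat X_{\mathfrak h}$ is generally not in $\mathfrak k$, so the appeal to $[\mathfrak k,\mathfrak k]\subseteq\mathfrak k$ at that point is misplaced even though the conclusion is harmlessly true.
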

A subalgebra $\mathfrak{k}$ of $\mathfrak{g}$ is said to be canonically embedded in $\mathfrak{g}$ if  $\mathfrak{k}=(\mathfrak{k}\cap\mathfrak{h})\oplus(\mathfrak{k}\cap\mathfrak{m})$.
\begin{lemma}\label{lemma:totgeocanembedded}
    Let $\Sigma$ be a degenerate submanifold of a naturally reductive pseudo\-/Riemannian manifold $G/H$, such that $\Sigma=K\cdot [\id]$, for a Lie subgroup $K$ with canonically embedded Lie algebra $\mathfrak{k}$ in $\mathfrak{g}$. 
    Then $\Sigma$ is a totally geodesic submanifold.
\end{lemma}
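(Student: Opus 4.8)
The plan is to imitate the argument for Lemma~\ref{lemma:sff}, but carry it out without assuming non-degeneracy, replacing the orthogonal complement $\mathfrak k_{\mathfrak m}^\perp$ by an arbitrary complement $\mathcal N$ as in the proof of the equivalence $(1)\Leftrightarrow(2)$ in the Lemma of Section~1. Concretely, since $K$ acts on $G/H$ by isometries and $\Sigma = K\cdot[\id]$, homogeneity reduces everything to the point $o = [\id]$: it suffices to show that for $X,Y\in\mathfrak k_{\mathfrak m} := \mathfrak k\cap\mathfrak m$, the covariant derivative $\tilde\nabla_{X^*}Y^*$ is tangent to $\Sigma$ at $o$, where $X^*,Y^*$ denote the Killing fields on $G/H$ induced by $X,Y\in\mathfrak k$. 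By the first item of the Lemma of Section~1, this is exactly what "totally geodesic" means for a (possibly degenerate) submanifold.

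The key computation is the standard formula for the Levi-Civita connection of a naturally reductive homogeneous space in terms of Killing fields: for $X,Y\in\mathfrak m$ one has $(\tilde\nabla_{X^*}Y^*)_o = -\tfrac12[X,Y]_{\mathfrak m}$ (the $\mathfrak h$-part acts trivially at $o$, and the natural reductivity is precisely what kills the usual symmetric correction term). Since $\mathfrak k$ is a subalgebra, $[X,Y]\in\mathfrak k$ for $X,Y\in\mathfrak k_{\mathfrak m}$; since $\mathfrak k$ is canonically embedded, $[X,Y]_{\mathfrak m}\in\mathfrak k\cap\mathfrak m = \mathfrak k_{\mathfrak m}$. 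But $\mathfrak k_{\mathfrak m}$ is exactly the tangent space $T_o\Sigma$ (the Killing fields $X^*$ for $X\in\mathfrak k_{\mathfrak m}$ span $T_o\Sigma$, while those coming from $\mathfrak k\cap\mathfrak h$ vanish at $o$). Hence $(\tilde\nabla_{X^*}Y^*)_o\in T_o\Sigma$. The symmetric part $\tilde\nabla_{X^*}Y^* + \tilde\nabla_{Y^*}X^*$ and the skew part $[X^*,Y^*]$ are both tangent, so one concludes $\tilde\nabla_{X^*}Y^*$ is tangent for all tangent Killing fields. Finally, since such Killing fields locally span $\mathfrak X(\Sigma)$ near $o$ and $\tilde\nabla$ is $C^\infty(G/H)$-linear in the lower slot, tangency extends to all $V,W\in\mathfrak X(\Sigma)$ near $o$; translating by $K$ gives it everywhere on $\Sigma$.

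I expect the main subtlety — not a deep obstacle, but the point needing care — to be the bookkeeping around degeneracy: in the non-degenerate case one can speak of \emph{the} second fundamental form valued in the orthogonal normal bundle, but here $T_o\Sigma$ may contain null directions and need not admit an orthogonal complement. The remedy, already used in Section~1, is to fix any (non-canonical) smooth complement $\mathcal N$ of $T\Sigma$ in $T(G/H)$ along $\Sigma$, write $\tilde\nabla_{X^*}Y^* = \tau(X^*,Y^*) + \nu(X^*,Y^*)$ with $\tau$ tangent and $\nu\in\Gamma(\mathcal N)$, and observe that the connection formula above forces $\nu\equiv 0$ because $[X,Y]_{\mathfrak m}$ is already tangent — so no choice of $\mathcal N$ detects any normal component. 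One should also remark that this argument never uses any metric property of $\mathcal N$, only that $\tilde\nabla$ is the Levi-Civita connection of the \emph{ambient} metric, which is what makes the naturally-reductive connection formula valid; the degeneracy of the induced metric on $\Sigma$ is irrelevant. I would close by noting that this is the degenerate analogue of \cite[Theorem 4.3]{alberto-nearly-kahler} referenced above, so the proof can be kept brief.
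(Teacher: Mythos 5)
Your argument is correct, but there is nothing in the paper to compare it with line by line: the paper states Lemma~\ref{lemma:totgeocanembedded} without proof, justifying it only by analogy with \cite[Theorem 4.3]{alberto-nearly-kahler}. Your route is the connection-theoretic one: you verify criterion (2) of the lemma in Section~1 via the naturally reductive formula $(\tilde\nabla_{X^*}Y^*)_o=-\tfrac12[X,Y]_{\mathfrak m}$ for the Killing fields induced by $X,Y\in\mathfrak k\cap\mathfrak m$, plus the observation that $[X,Y]_{\mathfrak m}\in\mathfrak k\cap\mathfrak m$ because $\mathfrak k$ is a canonically embedded subalgebra; and your remark that degeneracy is harmless is exactly right, since criterion (2) never mentions an orthogonal complement, so the auxiliary splitting $\mathcal N$ is not really needed at all --- tangent is tangent. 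A shorter route, and the one suggested by the paper's own later citation, uses criterion (1) directly: by \cite[Thm.\ 5.1.2]{nikonorovbook} the geodesic of $G/H$ through $o=[\id]$ with initial velocity $X\in\mathfrak m$ is $t\mapsto\exp(tX)\cdot o$; canonical embedding gives $T_o\Sigma=\mathfrak k\cap\mathfrak m$, so these geodesics stay in $K\cdot o=\Sigma$, and translating by elements of $K$ (ambient isometries preserving $\Sigma$) gives the same at every point, which is verbatim the paper's definition of totally geodesic --- no connection formula needed. Two small points of care in your write-up, neither a gap: canonical embedding is used twice, once to get $T_o\Sigma=\mathfrak k\cap\mathfrak m$ (otherwise $T_o\Sigma$ is only the projection of $\mathfrak k$ onto $\mathfrak m$, possibly larger than $\mathfrak k\cap\mathfrak m$, and your spanning claim would fail) and once for $[X,Y]_{\mathfrak m}\in\mathfrak k\cap\mathfrak m$, whereas you invoke it explicitly only for the latter; and since the Killing-field formula holds only at $o$, the globalization should be ordered as: first pass from Killing fields to arbitrary tangent fields at the single point $o$ (tensoriality in the lower slot, Leibniz in the upper slot, the derivative term being tangent because the $X^*$ with $X\in\mathfrak k$ are tangent along $\Sigma$), and only then translate by $K$ to all of $\Sigma$; your phrasing slightly blurs this order, but the content is there.
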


\begin{example}
    \label{ex:e1+e2}
For $u \in \R$ and $v \in [0,2\pi)$, consider
\begin{equation*}
    \begin{aligned}
    f_1(u,v)
    &=\pi\circ\exp (u(\cos v\, e_1+\sin v\, e_2)) \\
    &=\left[\begin{pmatrix}
    \cosh u+\cos v\sinh u&\sin v \sinh u&0\\
    \sin v \sinh u&\cosh u-\cos v\sinh u&0\\
    0&0&1\\
    \end{pmatrix}\right]. 
    \end{aligned}
\end{equation*}
The tangent space of the image of $f_1$ is $V_1$, and hence is preserved by $J$.
As such, this surface is almost complex.
Additionally, it is a negative definite surface with constant sectional curvature $4$ and it is the orbit through $[\id]$ of the subgroup $\mathrm{SL}(2,\R)\subset \Sl$.
As a consequence, the image of $f_1$ is isometric to $S^2_2(\frac 12)$.
Finally, since $[e_1,e_2]$ is contained in $\mathfrak h$, it follows from Lemma~\ref{lemma:sff} that this surface is totally geodesic.

\end{example}

\begin{example}
\label{ex:e3+e5}
Let $X = \frac{1}{\sqrt{2}}(e_3+e_5)$.
For $u \in \R$ and $v \in [0,2\pi)$, consider
\begin{equation*}
    \begin{aligned}
    f_2(u,v) 
    &= \pi \circ \exp\left( 2u (\cos v\, X + \sin v \, JX)\right) \\
    &= \left[
    \begin{pmatrix}
     \cos ^2 u-\cos 2 v \sin ^2 u & -\sin ^2 u \sin 2 v & \cos  v \sin 2 u \\
     -\sin ^2 u \sin 2 v & \cos ^2 u+\cos 2 v \sin ^2 u & \sin 2 u \sin  v \\
     -\cos  v \sin 2 u & -\sin 2 u \sin v & \cos 2 u 
    \end{pmatrix} \right].
    \end{aligned}
\end{equation*}
The image of $f_2$ is an almost complex surface with constant sectional curvature~$1$. 
Moreover, it is the orbit through $[\id]$ of the Lie subgroup $\mathrm{SO}(3)\subset\Sl$. 
As a consequence, the image of $f_2$ is isometric to a sphere $S^2(1)$.
Finally, since $[X,JX]$ is contained in $\mathfrak h$, it follows from Lemma~\ref{lemma:sff} that this surface is totally geodesic.

\end{example}

\begin{example}
\label{ex:e3-e5}
Let $X=\frac{1}{\sqrt{2}}(e_3-e_5)$. 
For $u \in \R$ and $v \in [0,2\pi)$, consider
\begin{equation*}
    \begin{aligned}
    f_3(u,v) 
    &= \pi\circ\exp (2u(\cos v \, X+\sin v \, JX)) \\
    &= \left[
    \begin{pmatrix}
     \cosh ^2u+\cos 2 v \sinh ^2u & \sin 2 v \sinh ^2u & \cos v\sinh 2 u \\
     \sin 2 v \sinh ^2u & \cosh ^2u-\cos 2 v \sinh ^2u & \sin v \sinh 2 u \\
     \cos v \sinh 2 u & \sin v \sinh 2 u & \cosh 2 u 
    \end{pmatrix} \right].
    \end{aligned}
\end{equation*}
The image of $f_3$ is a negative definite almost complex surface with constant sectional curvature $1$. 
Moreover, it is the orbit through $[\id]$ of the Lie subgroup $\mathrm{SO}^+(2,1)\subset\Sl$ through the point $[\id]$.
As a consequence, this surface is isometric to $S^2_2(1)$.
Finally, since $[X,JX]$ is contained in $\mathfrak h$, it follows from Lemma~\ref{lemma:sff} that this surface is totally geodesic.

\end{example}

\begin{example}
\label{ex:e1+e3+e5}
Let $X=\tfrac{1}{\sqrt{3}}e_1+e_3-\tfrac{1}{3}e_5$. 
For $u,v \in \R$, consider
\begin{equation*}
    \begin{aligned}
    f_4(u,v)
    &= \pi\circ\exp (u\,X+v\,JX) \\
    &=\left[e^{-\frac{u}{3}}
    \begin{pmatrix}
     \frac{1}{6} e^{-v} \left(1+e^{2 v}+4 e^{u+v}\right) & -\frac{1}{\sqrt{3}} \sinh v& -\frac{e^{-v} \left(1+e^{2 v}-2 e^{u+v}\right)}{\sqrt{6}} \\
     -\frac{1}{\sqrt{3}}\sinh v & \cosh v & \sqrt{2} \sinh v \\
     -\frac{e^{-v}}{3 \sqrt{6}} \left(1+e^{2 v}-2 e^{u+v}\right) & \frac{\sqrt{2}}{3} \sinh v & \frac{1}{3} e^{-v} \left(1+e^{2 v}+e^{u+v}\right) \\
    \end{pmatrix}\right].
    \end{aligned}
\end{equation*}
The image of $f_4$ is a negative definite flat almost complex surface.
Moreover, it is the orbit through $[\id]$ of the Lie subgroup $\R^2\subset \Sl$ with Lie algebra $\mathrm{span}\{X,JX\}$.
As a consequence, this surface is isometric to $\R^2_2$.
Finally, since $\mathrm{span}\{X,JX\}$ is abelian, it follows from Lemma~\ref{lemma:sff} that the surface is totally geodesic.

\end{example}

\begin{example}
\label{ex:degenerateexample}
    For $u,v \in \R$, consider
    \[
    f_5(u,v) = 
    \pi \circ \exp\left(\tfrac{1}{\sqrt{2}} (u e_3+ v e_4) \right) =
    \left[ \begin{pmatrix}
        1 & 0 & u \\
        0 & 1 & v \\
        0 & 0 & 1
    \end{pmatrix}\right].
    \]
    The tangent space of the image of $f_5$ is $V_2$, and hence is preserved by $J$.
    As such, this surface is almost complex.
    Additionally, it is a degenerate surface and is the orbit through~$[\id]$ of the subgroup $\R^2\subset\Sl$.
    Finally, since the Lie algebra $\mathrm{span}\{e_3,e_4\}$ is included in $\mathfrak{m}$, it follows from Lemma~\ref{lemma:totgeocanembedded} that this surface is totally geodesic.
\end{example}

\subsection{Proof of Theorem \texorpdfstring{\ref{thm:main}}{A}}
Let $X$ be a unit tangent vector.
Note that, when the induced metric on $\Sigma$ is negative definite, we take $X$ with length $-1$.
After possibly restricting to an open dense subset, we may assume that locally either
\begin{enumerate}
\item[(1)]$X$ and therefore also $T\Sigma$, is contained in a single distribution $V_i$;
\item[(2)]$X$ and therefore also $T\Sigma$, is contained in the direct sum of the two
distributions $V_i\oplus V_j$;
\item[(3)]$X$ and therefore also $T\Sigma$, is contained in the direct sum of all three the
distributions.
\end{enumerate}
Note that the map $\varphi\colon \Sl/H\to\Sl/H:[A]\mapsto [(A^t)^{-1}]$ is an isometry not included in $\Sl$, the identity component of the isometry group of $\Sl/H$.
A direct computation shows that $\varphi_*$ preserves $V_1$ --meaning that $(V_1)_p$ is mapped into $(V_1)_{\varphi(p)}$ by $\varphi_*$-- and that $\varphi_*$ maps $(V_2)_p$ into $(V_3)_{\varphi(p)}$ and vice versa.
Hence, the cases in which $X$ is contained in $V_1\oplus V_2$ and $V_1\oplus V_3$ are related via an isometry of the ambient space.
Moreover, since $M$ is almost complex, $TM$ is spanned by $X$ and $JX$ and therefore we may take as basis elements any linear combination with length $\pm1$ between $X$ and $JX$.
Finally, we may also apply the adjoint action of $H$ to simplify $X$, without changing the surface $\Sigma$.

Taking the above arguments into account, we reduce the problem to the following cases, assuming the surface is non-degenerate:
\begin{enumerate}[(1)]
    \item $X=e_1$, \label{case:V1}
    \item $X=\varepsilon e_1+e_5$ with $\varepsilon\in\{-1,1\}$, \label{case:V1+V3}
    \item $X=\frac{1}{\sqrt{2}}(e_3+\varepsilon e_5)$ with $\varepsilon\in\{-1,1\}$, \label{case:V2+V3}
    \item $X=ae_1+e_3+\frac12 (a^2+\varepsilon )e_5+be_6$ with $a>0$ and $\varepsilon \in \{-1,1\}$. \label{case:V1+V2+V3}
\end{enumerate}
If we allow the surface to be degenerate, we obtain one additional case:
\begin{enumerate}[(5)]
    \item $X=e_3$. \label{case:V2}
\end{enumerate}

Note that the curvature tensor of the ambient space must preserve the tangent space of a totally geodesic surface, degenerate or non-degenerate.
From this fact, we deduce that Case~\ref{case:V1+V3} cannot occur.
Indeed, using the curvature~\eqref{eqn:curvature-tensor}, we obtain $\bar R(X,JX)JX = -4\varepsilon\, e_1 +2 e_5$, which is not contained in $\text{span}\{X,JX\}$.
Similarly,  in Case~\ref{case:V1+V2+V3} we deduce that $\varepsilon=-1$, $a = \frac{1}{\sqrt{3}}$ and $b=0$, by calculating $\bar R(X,JX)JX$.

Since $\Sigma$ is totally geodesic and almost complex by assumption, we know that $\Sigma$ must arise as the Riemannian exponential of $\text{span}\{X,JX\}$.
However, since the ambient space is naturally reductive, the Riemannian exponential at $o$ of a vector $Y \in \mathfrak m$ coincides with $\exp(Y)\cdot o$, where $\exp$ denotes the Lie group exponential~\cite[Thm.\ 5.1.2]{nikonorovbook}.
Hence, it follows immediately that Case~\ref{case:V1} corresponds to Example~\ref{ex:e1+e2}, Case~\ref{case:V2+V3} to Examples~\ref{ex:e3+e5} and~\ref{ex:e3-e5}, Case~\ref{case:V1+V2+V3} to Example~\ref{ex:e1+e3+e5} and Case~\ref{case:V2} to~Example~\ref{ex:degenerateexample}.

\bibliographystyle{abbrv}
\bibliography{references}

\end{document}